\documentclass[10pt]{article}

\usepackage[english]{babel}
\usepackage{amsmath}
\usepackage{amssymb}
\usepackage{bm}
\usepackage{bbm}
\usepackage{mathrsfs,color}
\usepackage{graphics,graphicx,theorem}
\usepackage{dsfont}
\pagestyle{plain}
\usepackage{setspace}

\usepackage{mathtools}

\usepackage{natbib}
\usepackage{multirow}
\usepackage{hhline}
\usepackage{hyperref}

\usepackage{authblk}

\hypersetup{
	colorlinks=true,
	urlcolor=blue,
	citecolor=blue,
	linktoc=all,
	linkcolor=red} 

\usepackage[scriptsize]{subfigure}
\allowdisplaybreaks

\makeatletter
\renewcommand\@biblabel[1]{}
\makeatother

\def\bSig\mathbf{\Sigma}

\newcommand{\E}{\mathbb{E}}

\renewcommand{\P}{\mathbb{P}}


\newcommand{\R}{\mathds{R}}





\newcommand{\Pcr}{\mathscr{P}}

\newcommand{\ddr}{\mathrm{d}}




\newcommand{\D}{{\rm d}}

\newcommand{\ind}[1]{1\!\!1_{#1}}

\newtheorem{theorem}{Theorem}[section]
\newtheorem{definition}{Definition}[section]
\newtheorem{proposition}{Proposition}[section]
\newtheorem{corollary}{Corollary}[section]

\newenvironment{proof}{\noindent \textit{Proof.}}{\hfill$\square$}

\usepackage[top=1.2in,bottom=1.2in,left=1.2in,right=1.2in]{geometry}

\onehalfspacing

\begin{document}

\title{\bf {\Large{Consistent estimation of the missing mass for feature models}}}

\author[,1]{Fadhel Ayed \thanks{fadhel.ayed@gmail.com}}
\author[,2]{Marco Battiston \thanks{marco.battiston@stats.ox.ac.uk}}
\author[,3]{Federico Camerlenghi \thanks{federico.camerlenghi@unimib.it}}
\author[,4]{Stefano Favaro \thanks{stefano.favaro@unito.it}}

\affil[1]{University of Oxford}
\affil[2]{Lancaster University}
\affil[3]{University of Milano--Bicocca}
\affil[4]{University of Torino}

\date{}
\maketitle
\thispagestyle{empty}

\setcounter{page}{1}

\begin{abstract}
Feature models are popular in machine learning and they have been recently used to solve many unsupervised learning problems. In these models every observation is endowed with a finite set of features, usually selected from an infinite collection $(F_{j})_{j\geq 1}$. Every observation can display feature $F_{j}$ with an unknown probability $p_{j}$. A statistical problem inherent to these models is how to estimate, given an initial sample, the conditional expected number of hitherto unseen features that will be displayed in a future observation. This problem is usually referred to as the missing mass problem. In this work we prove  that, using a suitable multiplicative loss function and without imposing any assumptions on the parameters $p_{j}$, there does not exist any universally consistent estimator for the missing mass. In the second part of the paper, we focus on a special class of heavy-tailed probabilities $(p_{j})_{j\geq 1}$, which are common in many real applications, and we show that, within this restricted class of probabilities, the nonparametric estimator of the missing mass suggested by \cite{Fad(17)} is strongly consistent. As a byproduct result, we will derive concentration inequalities  for the missing mass and the number of features observed with a specified frequency in a sample of size $n$.
\end{abstract}

\noindent\textsc{Keywords}: {Feature models; missing mass; multiplicative consistency; regular variation; nonparametric estimator.} 

\maketitle

\section{Introduction}

Feature models generalize species sampling models by allowing every observation to belong to more than one species, now called features. In particular, every observation is endowed with a finite set of features selected from a (possibly infinite) collection of features $(F_{j})_{j\geq1}$. Every feature $F_{j}$ is associated with an unknown probability $p_{j}$, and each observation displays feature $F_{j}$ with probability $p_{j}$. We may conveniently represent each observation with a binary sequence, whose entries indicate the presence (1) or absence (0) of each feature.  Feature models have been first applied in ecology for modeling incidence vectors collecting the presence or absence of species traps (\citet{Col(12)} and \citet{Cha(14)}), and more recently in several fields of biosciences, such as the study of genetic variation and protein interactions (\citet{Chu(06)}, \citet{Ion(09)}, \citet{Ion(10)} and \citet{Zou(16)}). They also found applications in the analysis of choice behaviour arising from psychology, marketing and computer science (\citet{Gor(06)}); in the context of binary matrix factorization for modeling dyadic data to design recommender system (\citet{Mee(07)}); in graphical models (\citet{Woo(06)} and \citet{Woo(07)}); in cognitive psychology for the analysis of similarity judgement matrices (\citet{Nav(07)}); in the context of independent component analysis and sparse factor analysis (\citet{Kno(07)}); in link prediction using network data (\citet{Mil(10)}).

The Bernoulli product model is arguably the most popular feature model. It assumes that the $i$--th observation is a sequence $Y_{i}=(Y_{i,j})_{j\geq1}$ of independent Bernoulli random variables with unknown success probabilities $(p_{j})_{j\geq1}$, and that $Y_{r}$ is independent of $Y_{s}$ for any $r\neq s$. Therefore $X_{n,j}:=\sum_{1\leq i\leq n}Y_{i,j}$, namely the number of times that feature $F_{j}$ has been observed in a sample $(Y_{1},\ldots,Y_{n})$, is a Binomial random variable with parameter $(n,p_{j})$ for any $j\geq1$. Recently, the Bernoulli product model has been extensively applied to the fundamental problem of discovering genetic variation in human populations. See, e.g., \citet{Ion(09)}, \citet{Zou(16)}) and references therein. In such a context, interest is in estimating the conditional expected number, given a sample $(Y_{1},\ldots,Y_{n})$, of hitherto unseen features that would be observed if an additional sample $Y_{n+1}$ was collected, namely
\begin{equation} \label{missing_mass}
M_{n}(Y_{1},\ldots,Y_{n};(p_{j})_{j\geq1})=\E\left[\sum_{j\geq1}\ind{\{ X_{n, j} =0,Y_{n+1,j}=1\}}\,|\,Y_{1},\ldots,Y_{n}\right]=\sum_{j\geq 1} p_j \ind{\{ X_{n, j} =0\}}, 
\end{equation}
where $\ind{}$ is the indicator function. The statistic $M_{n}(Y_{1},\ldots,Y_{n};(p_{j})_{j\geq1})$ is referred to as the missing mass, i.e. the sum of the probability masses of unobserved features in a sample of size $n$. In genetics, interest in estimating \eqref{missing_mass} is motivated by the ambitious prospect of growing databases to encompass hundreds of thousands of genomes, which makes important  to quantify the power of large sequencing projects to discover new genetic variants (\citet{Aut(15)}). An accurate estimate of the missing mass provides a quantitative evaluation of the potential and limitations of these datasets, providing a roadmap for large-scale sequencing projects.

Let $\hat{T}_{n}(Y_{1},\ldots,Y_{n})$ denote an arbitrary estimator of $M_{n}(Y_{1},\ldots,Y_{n};(p_{j})_{j\geq1})$.
For easiness of notation, in the rest of the paper we will not highlight the dependence on $(Y_{1},\ldots,Y_{n})$ and $(p_{j})_{j\geq1}$, and we simply write $M_{n}$ and $\hat{T}_{n}$.  Motivated by the recent works of \citet{Oha(12)}, \citet{Mos(15)}, \citet{Ben(17)} and \citet{Fad(18)} on the estimation of the missing mass in species sampling models, in this paper we consider the problem of consistent estimation of $M_{n}$ under the Bernoulli product model. The classical notion of \textit{additive} consistency, involving the large $n$ limiting behaviour of $\hat{T}_n-M_n$, is not suitable in the context of the estimation of $M_{n}$. This is because $M_n \to 0$, as $n \to +\infty$, which implies that $0$ is a consistent estimator of the missing mass for any sequence $(p_j)_{j \geq 1}$. Hence, in such a framework, one should invoke a more adequate notion of consistency, which allows to achieve more informative results. This notion of consistency is based on the limiting behaviour of the multiplicative loss function
\begin{equation} \label{loss}
L(\hat{T}_{n},M_{n}):=\left| \frac{\hat{T}_{n}}{M_{n}}-1 \right|.
\end{equation}
More precisely  we say that the estimator $\hat{T}_{n}$ is \textit{multiplicative} consistent for $M_{n}$ if $\hat{T}_n/M_n \to 1$ as $n\rightarrow+\infty$, either almost surely or in probability. The multiplicative loss function has been already used in statistics, e.g. for the estimation of small value probabilities using importance sampling (\citet{Cha(18)}) and for the estimation of tail probabilities in extreme value theory (\citet{Bei(99)}).
We show that there do not exist universally consistent estimators, in the multiplicative sense, of the missing mass $M_{n}$. That is, under the Bernoulli product model and the loss function \eqref{loss}, we prove that for any estimator $\hat{T}_{n}$ of $M_{n}$ there exists at least a choice of $(p_{j})_{j\geq1}$ for which $\hat{T}_n/M_n$ does not converge to $1$ in probability, as $n\rightarrow+\infty$. The proof relies on non-trivial extensions of Bayesian nonparametric ideas and techniques developed by \citet{Fad(18)} for the estimation of the missing mass in species sampling models. In particular, the key argument makes use of a generalized Indian Buffet construction (\cite{Jam(17)}), which allows to prove inconsistency by exploiting properties of the posterior distribution of $M_{n}$. Our inconsistency result is the natural counterpart for feature models of the work of \citet{Mos(15)}, showing the impossibility of estimating the missing mass without imposing any structural (distributional) assumption on the  $p_{j}$'s. We complete our study by investigating the consistency of an estimator of $M_{n}$ recently proposed by \citet{Fad(17)}. To the best of our knowledge this is the first nonparametric estimator of $M_{n}$, in the sense that its derivation does not rely on any distributional assumption on the $p_{j}$'s. We show that the estimator of \citet{Fad(17)} is strongly consistent, in the multiplicative sense, under the assumption that the tail of $(p_{j})_{j\geq1}$ decays to zero as a regularly varying function (\citet{Bin87}). The proof relies on novel concentration inequalities for $M_{n}$, as well as for related statistics, which are of independent interest.

The paper is structured as follows. In Section \ref{sec:inconsistency} we prove that for the Bernoulli product model there do not exist universally consistent estimators, in the multiplicative sense, of the missing mass $M_{n}$. Section \ref{sec:consentration} introduces some exponential tail bounds for $M_{n}$, as well as for related statistics, which are then applied in Section \ref{sec:regular_variation} to show that the estimator of $M_{n}$ in \citet{Fad(18)} is consistent under the assumption of regularly varying probabilities $p_{j}$'s.


\section{Non existence of universally consistent estimators of the missing mass} \label{sec:inconsistency}

Consider the Bernoulli product model described in the Introduction. Without loss of generality, we assume that each feature $F_{j}$ is labeled by a value in $[0,1]$ and therefore $(F_{j})_{j\geq 1}$ is a sequence of distinct points in $[0,1]$. Furthermore, the probabilities $(p_{j})_{j\geq 1}$ are assumed to be summable, i.e. $\sum_{j \geq 1}p_j < +\infty$; this condition is needed in order to guarantee that every observation $Y_{i}$ will display only a finite number of features almost surely. Indeed, $\sum_{j \geq 1}p_j < +\infty$ is equivalent to $\sum_{j \geq 1}\P(F_{j} \in Y_{i}) = \sum_{j \geq 1}\E[\ind{\{F_{j} \in Y_{i}\}}] < +\infty$, which in turns implies 
$\sum_{j \geq 1}\ind{\{F_{j} \in Y_{i}\}} < +\infty$ almost surely, by Tonelli-Fubini Theorem. The two unknown sequences $(F_{j})_{j\geq 1}$ and $(p_{j})_{j\geq 1}$ can be uniquely encoded in a finite measure on $[0,1]$, $\sum_{j\geq 1}p_{j} \delta_{F_{j}}(\cdot)$, with all masses smaller than one. We can therefore consider as parameter space the set 
\begin{equation}
\Pcr:= \left\{ \sum_{j\geq 1}p_{j} \delta_{F_{j}}: \; F_{j},p_j \in [0,1], \, \forall j\geq 1,  \, \sum_{j \geq 1} p_j <+\infty \right\}.
\end{equation}
Recall that $X_{n,j}$ denotes the number of times that feature $F_{j}$ has been observed in the sample $(Y_{1},\ldots,Y_{n})$, that is $X_{n,j}= \sum_{1\leq i\leq n}Y_{i,j}=\sum_{1\leq i \leq n} \ind{\{F_{j} \in Y_{i}\}}$ is a Binomial random variable with parameter $(n,p_j)$. For a fixed $n \geq 1$, an estimator $\hat{T}_{n}:[0,1]^{n}\rightarrow \R_{+}$ of the missing mass $M_n$ is a measurable map which argument is the observed sample $\mathbf{Y}_{n}=(Y_1, \ldots , Y_n)$. We say that the estimator $\hat{T}_n$ is multiplicative consistent under the parameter space  $\Pcr$ if for every $\epsilon>0$ and every $p\in \Pcr$,
\begin{equation}\label{eq:consistency}
 \lim_{n \to +\infty } \P_{\mathbf{Y}_{n}|p} \left( \left| \frac{\hat{T}_n}{M_n}-1 \right| \geq \varepsilon  \right) =0,
\end{equation} 
where $\P_{\mathbf{Y}_{n}|p}$ denotes the law of the observations $\mathbf{Y}_{n}$ under a feature allocation model of parameter $p$. Theorem \ref{thm:inconsistency} shows that there are no universally multiplicative consistent estimators of $M_{n}$ for the class $\Pcr$. This means that for any estimator $\hat{T}_n$ of the missing mass, there exists at least one element  $p \in \Pcr$ for which $\hat{T}_n/M_n$ does not converge to $1$ in probability, as $n\rightarrow+\infty$.

\begin{theorem} \label{thm:inconsistency}
Under the feature allocation model, there are no universally consistent estimators, i.e. there are no estimators   satisfying \eqref{eq:consistency}. In particular, for every estimator  $\hat{T}_n$, it is possible to find an element $p \in \Pcr$ such that for any $\varepsilon \in (0, 1/6)$ 
\begin{equation}
\label{eq:inconsistency}
 \limsup_{n \to +\infty } \P_{\mathbf{Y}_{n}|p} \left( \left| \frac{\hat{T}_n}{M_n}-1 \right| \geq \varepsilon  \right) >C.
\end{equation} 
for some strictly positive constant $C$.
\end{theorem}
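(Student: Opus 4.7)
I would mimic, in the feature-model setting, the Bayesian contradiction strategy employed by \citet{Fad(18)} for species sampling, using the generalized Indian Buffet construction of \citet{Jam(17)} as the nonparametric scaffolding. The high-level idea is to produce a probability measure $\Pi$ on $\Pcr$ under which the posterior distribution of $M_n$ given $\mathbf{Y}_n$ retains a non-degenerate multiplicative spread as $n\to\infty$; once this is in place, no $\sigma(\mathbf{Y}_n)$-measurable estimator can satisfy $\hat{T}_n/M_n \to 1$ in probability under the joint law $\Pi \otimes \P_{\mathbf{Y}_n|p}$, and a Fubini argument will pinpoint a specific $p\in\Pcr$ realising \eqref{eq:inconsistency}.

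First I would take $\Pi$ to be a completely random measure prior (a beta process, or the stable-beta-type generalisation of \citet{Jam(17)}) so that $(Y_1,\ldots,Y_n)$ marginally follows a (generalised) IBP. Beta--Bernoulli conjugacy then splits the posterior of $\sum_j p_j\delta_{F_j}$ into two independent pieces: atomic weights at the observed feature locations, updated via the counts $X_{n,j}$, and a completely random ``new features'' part whose total mass coincides with $M_n$. The Laplace transform of $M_n\mid\mathbf{Y}_n$ becomes explicit in terms of the posterior Lévy intensity, and by tuning $\Pi$ into a regularly-varying/stable regime one shows that a suitable normalisation $M_n/a_n$ converges conditionally on $\mathbf{Y}_n$ to a non-degenerate random variable $W$. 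The quantitative input needed downstream is that, uniformly in $n$ large, the posterior of $M_n$ charges two disjoint multiplicative bands $\{M_n \leq (1-\delta)m_n\}$ and $\{M_n \geq (1+\delta)m_n\}$ with probabilities bounded away from $0$, where $\delta$ is chosen so that $\delta/(1+\delta) > 1/6$.

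The contradiction step is then standard. If $\hat{T}_n$ satisfied \eqref{eq:consistency} for every $p \in \Pcr$, dominated convergence in the prior direction would give $L(\hat{T}_n, M_n) \to 0$ in probability under the joint law. But for $\sigma(\mathbf{Y}_n)$-measurable $\hat{T}_n$,
\begin{equation*}
\P_{\text{joint}}\bigl( L(\hat{T}_n, M_n) \geq \varepsilon \bigr) \;\geq\; \E\Bigl[\, \inf_{t \geq 0} \P\bigl(|t/M_n - 1| \geq \varepsilon \,\big|\, \mathbf{Y}_n\bigr) \Bigr],
\end{equation*}
and the two-band posterior spread from the previous step forces the right-hand side to be bounded below by a strictly positive constant $C$ for every $\varepsilon < 1/6$. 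This contradicts joint convergence, so the set of $p\in\Pcr$ for which \eqref{eq:consistency} fails must have positive $\Pi$-measure, and in particular must be nonempty; any such $p$ provides the required counterexample realising \eqref{eq:inconsistency}.

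The main obstacle is the posterior analysis in the second step: obtaining a sufficiently explicit description of $M_n\mid\mathbf{Y}_n$ through its posterior Laplace transform under the generalised IBP, and pinning down a non-degenerate limiting law with quantitatively controlled multiplicative spread. This requires working with Lévy intensities that are non-integrable near zero (so that infinitely many features are present while the total mass remains finite), isolating the asymptotic contribution of the ``unseen'' atoms, and verifying that the resulting conditional limit is not a point mass. The remainder of the argument is essentially a Bayesian minimax repackaging and should proceed along standard lines once the posterior-spread input is secured.
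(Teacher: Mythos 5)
Your plan follows essentially the same route as the paper: a completely random measure / generalized Indian Buffet prior, the James (2017) posterior decomposition isolating the unseen mass as an independent CRM, and a uniform lower bound on the posterior multiplicative spread of $M_n$ plugged into a Bayesian lower bound on the supremum over $\Pcr$. The ``main obstacle'' you flag dissolves under the paper's concrete choice of L\'evy intensity $e^{-s}s^{-1}\,\D s\,\ind{(0,1)}(F)\D F$, for which the total unseen posterior mass $S_n$ is exactly Gamma$(1,n+1)$, so that $\inf_{x>0}\P\left(\left|S_n/x-1\right|>3\varepsilon\right)\geq C>0$ uniformly in $n$ with no conditional limit theorem needed, and the elementary bounds $1-S_n/2\leq M_n/S_n\leq 1$ transfer this spread from $S_n$ to $M_n$.
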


\subsection{Proof of Theorem \ref{thm:inconsistency}}
In order to prove Theorem \ref{thm:inconsistency}, it is enough to show that for every estimator $\hat{T}_n$ and every $\epsilon \in (0, 1/6)$,
\begin{equation} \label{eq:proof1}
\sup_{p\in \Pcr} \limsup_{n \to +\infty } \P_{\mathbf{Y}_{n}|p} \left( \left| \frac{\hat{T}_n}{M_n}-1 \right| \geq \epsilon  \right) >C,
\end{equation} 
and therefore there exists a $p\in \Pcr$ for which $\hat{T}_n$ is not consistent. 

First, let us notice that, for every $\epsilon \in (0, 1/6)$,
\begin{equation}\label{ineq_inverse}
\sup_{p\in \Pcr} \limsup_{n \to +\infty } \P_{\mathbf{Y}_{n}|p} \left( \left| \frac{\hat{T}_n}{M_n}-1 \right| \geq \epsilon  \right) \geq \sup_{p\in \Pcr} \limsup_{n \to +\infty } \P_{\mathbf{Y}_{n}|p} \left( \left| \frac{M_n}{\hat{T}_n}-1 \right| \geq 2\epsilon  \right).
\end{equation}
Indeed, if $\left|\frac{\hat{T}_n}{M_n} - 1 \right| < \epsilon$, then 
\begin{equation}
-M_n\epsilon  < \hat{T}_n-M_n < M_n\epsilon, \label{ineq:inv_d1}
\end{equation}
and, from the lower bound of \eqref{ineq:inv_d1}, $\hat{T}_n < (1-\epsilon) M_n$. Because $\epsilon < 1/2$, it follows that
$ \frac{1}{\hat{T}_n} < \frac{2}{M_n}$.  This last inequality together with \eqref{ineq:inv_d1}  leads to 
$\left| \frac{M_n}{\hat{T}_n} - 1 \right| < 2\epsilon $. Considering the complements of the two events, it follows that
$$ \left| \frac{M_n}{\hat{T}_n} - 1 \right| \geq 2\epsilon \Rightarrow  \left| \frac{\hat{T}_n}{M_n} - 1 \right| \geq \epsilon, $$
and, as a consequence, $ \mathbb{P}(|\frac{\hat{T}_n}{M_n} - 1| \geq \epsilon) \geq \mathbb{P}(|\frac{M_n}{\hat{T}_n} - 1| \geq 2\epsilon)$, proving (\ref{ineq_inverse}). From now on, we will denote $\varepsilon = 2 \epsilon \in (0,1/3)$ and prove that 
\begin{equation}
\sup_{p\in \Pcr} \limsup_{n \to +\infty } \P_{\mathbf{Y}_{n}|p} \left( \left| \frac{M_n}{\hat{T}_n}-1 \right| \geq 2\epsilon  \right)>C,
\end{equation}
for some strictly positive constant $C$.

The main idea of the proof is in the following formula and works as follows: we lower bound the supremum over $\Pcr$ in \eqref{ineq_inverse} by an average with respect to a (carefully chosen) prior for $p$; we swap the conditional distribution of $\mathbf{Y}_{n}|p$ and the marginal of $p$ with the conditional of $p|\mathbf{Y}_{n}$ and the marginal of $\mathbf{Y}_{n}$; we lower bound the event probability with respect to the posterior of $p$ given $\mathbf{Y}_{n}$.
Formally,
\begin{align} 
\nonumber  & \sup_{p\in \Pcr} \limsup_{n \to +\infty } \P_{\mathbf{Y}_{n}|p} \left( \left| \frac{M_n}{\hat{T}_n}-1 \right| \geq \varepsilon  \right)\geq  
\E_{p} \left[ \limsup_{n \to +\infty}  \P_{\mathbf{Y}_{n}| p}\left( \left| \frac{M_n}{\hat{T}_n}-1 \right| \geq \varepsilon  \right) \right] \\ \label{eq:suplim1}
&\qquad\qquad \geq \limsup_{n \to +\infty } \E_{\mathbf{Y}_{n}} \left[\P_{p| \mathbf{Y}_{n}} \left( \left| \frac{M_n}{\hat{T}_n}-1 \right| \geq \varepsilon  \right) \right]. 
\end{align} 
where we have applied reverse Fatou's lemma to take the $\limsup$ outside the expectation. In \eqref{eq:suplim1}, $\E_{p}$ denotes the expectation with respect to the prior for $p$, $\E_{\mathbf{Y}_{n}}$ the expectation with respect to the marginal distribution of $\mathbf{Y}_{n}$ and $\P_{p| \mathbf{Y}_{n}}$ the probability under the posterior of $p$ given $\mathbf{Y}_{n}$. 

Our choice of the nonparametric prior for $p$ is based on completely random measures (see \cite{Dal(08)}) and the generalized Indian Buffet process prior of \cite{Jam(17)}. In particular, a prior for $p \in \Pcr$ can defined through a completely random measure $\tilde{N}(\cdot)=\sum_{j}s_{j} \delta_{F_j}(\cdot)$ on $[0,1]$, where $(\{s_{j},F_{j}\})_{j\geq 1}$ is  a Poisson Point Process on $\R^+\times[0,1]$, by setting $p(\cdot)=\sum_{j}(1-e^{-s_j}) \delta_{F_j}(\cdot) \in \Pcr$. We select $\tilde{N}$ to be a completely random measure with L\'evy intensity $\nu (\D s, \D F) = e^{-s}/s \,\D s \ind{(0,1)}(F) \D F$. The distribution of 
$\tilde{N}$ is completely characterized by its Laplace functional defined as follows,
\begin{equation*}
\E \left[ e^{-\int_{ [0,1]} f( F) \tilde{N}( \D F)} \right]  = \exp \left\{- \int_{\R^+\times [0,1]} (1-e^{-sf(F)}) \nu (\D s, \D F) \right\},
\end{equation*}
for any measurable function $f:  [0,1] \to \R^+$. See also \cite{Kin(93)}.

Theorem 3.1 of \cite{Jam(17)} provides with a distributional equality for the posterior of $\tilde{N}$ given $\mathbf{Y}_{n}$. Denoting by $F_1^*, \ldots , F_{k_n}^*$ the $k_n$ distinct features observed in $\mathbf{Y}_{n}$, we have the following distributional equality 

\begin{equation}
\label{eq:posterior}
\tilde{N}| \mathbf{Y}_{n} \stackrel{d}{=} \tilde{N}_n+\sum_{\ell=1}^{k_n}  J_\ell \delta_{F_\ell^*}
\end{equation}
where the $J_\ell$'s are non-negative random jumps and $\tilde{N}_n$ is an independent  completely random measure with updated L\'evy intensity $\nu_n (\D s , \D F) = e^{-s n} \nu (\D s , \D F)$. \\

Defining $A_n := \{F_1^*, \ldots , F_{k_n}^*  \} $,  from  \eqref{eq:posterior} we have that, for any Borel set $B$ in $\R^+$, the missing mass $M_n$ satisfies
\begin{equation}
\begin{split}
\P_{p|\mathbf{Y}_{n}} (M_n \in B)&= \P_{p|\mathbf{Y}_{n}} \left( \sum_{j \geq 1} p_j \delta_{F_{j}} (A_n^c) \in B  \right)\\
& = \P_{\tilde{N}|\mathbf{Y}_{n}} \left( \sum_{j \geq 1} (1-e^{-s_{j}}) \delta_{F_{j}} (A_n^c) \in B   \right)\\
  & = \P_{\tilde{N}|\mathbf{Y}_{n}} \left( \int_{  A_n^c }(1-e^{-s}) \tilde{N}( \D F) \in B  \right)\\
& \stackrel{\eqref{eq:posterior}}{=}  
\P_{\tilde{N}_n} \left( \int_{ [0,1] }(1-e^{-s}) \tilde{N}_n( \D F) \in B  \right)
\end{split}
\end{equation}
showing that the posterior  distribution of the missing mass $M_n$ is equal in distribution to the random variable $\int_{ [0,1] }(1-e^{-s}) \tilde{N}_n( \D F) $. Besides, it is worth to introduce the random variable 
\[
S_n: = \tilde{N}_{n}([0,1])= \int_{[0,1]} s \tilde{N}_{n} ( \D F)
\] 
whose distribution can be computed exactly and turns out to be a Gamma random variable of parameters $(1,n+1)$. Indeed, from the Laplace functional,  for every $x\in \R$ we have
\begin{align*}
\E [e^{xS_n}] &= \E \left[ \exp \left\{ x\int_{[0,1]} s \tilde{N}_{n} ( \D F) \right\} \right]\\
& = \exp \left\{- \int_0^1 \int_0^{+\infty} (1-e^{xs}) e^{-sn} \nu (\D s, \D \theta) \right\}\\
& = \exp \left\{ - \int_0^{+\infty} (1-e^{xs}) \frac{e^{-s(n+1)}}{s} \D s \right\}
= \left( 1-\frac{x}{n+1} \right)^{-1},
\end{align*} 
which is the characteristic function of a Gamma$(1,n+1)$ random variable.\\

We now have all the necessary ingredients to prove the lower bound \eqref{eq:suplim1}. Fix $\varepsilon \in (0,1/3)$. First note that, the inverse triangular inequality entails
\begin{equation} \label{eq:triangular}
\begin{split}
\left| \frac{M_n}{\hat{T}_n} -1 \right| &= \left| \frac{M_n}{S_n} \left(\frac{S_n}{\hat{T}_n} -1+1 \right) -1 \right|
\geq \left| \frac{M_n}{S_n} \left|\frac{S_n}{\hat{T}_n} -1  \right|-\left|\frac{M_n}{S_n}-1  \right| \right|\\
& \geq \frac{M_n}{S_n} \left|\frac{S_n}{\hat{T}_n} -1  \right|-\left|\frac{M_n}{S_n}-1  \right|
\end{split}
\end{equation}
which implies
\begin{equation} \label{eq:prob_ineq1}
\P_{p| \mathbf{Y}_{n}} \left(  1-\frac{\varepsilon}{2}\leq \frac{M_n}{S_n}\leq 1 \, , \,
\Big| \frac{S_n}{\hat{T}_n}-1\Big| > 3\varepsilon \right)  \leq \P_{p| \mathbf{Y}_{n}} \left( \Big| \frac{M_n}{\hat{T}_n}-1 \Big| > \varepsilon\right) 
\end{equation}
indeed, thanks to \eqref{eq:triangular}, the two events together 
\[
 1-\frac{\varepsilon}{2}\leq \frac{M_n}{S_n}\leq 1 \, , \,
\Big| \frac{S_n}{\hat{T}_n}-1\Big| >3 \varepsilon
\] imply that
\[
\left| \frac{M_n}{\hat{T}_n} -1 \right| \geq \frac{M_n}{S_n} \left|\frac{S_n}{\hat{T}_n} -1  \right|-\left|\frac{M_n}{S_n}-1  \right| \geq \left(1-\frac{\varepsilon}{2}\right)3\varepsilon -\frac{\varepsilon}{2} =\varepsilon(5-3\varepsilon)/2
> \varepsilon\]
where the last inequality follows from the fact that $\varepsilon <1$. Hence, from \eqref{eq:prob_ineq1}, we have that
\begin{equation*}
 \P_{p| \mathbf{Y}_{n}} \left( \left| \frac{M_n}{\hat{T}_n}-1 \right| > \varepsilon \right)  \geq 
 \P_{p| \mathbf{Y}_{n}} \left(  1-\frac{\varepsilon}{2}\leq \frac{M_n}{S_n}\leq 1\right)-1+ \P_{p| \mathbf{Y}_{n}} \left(
\left| \frac{S_n}{\hat{T}_n}-1\right| > 3\varepsilon \right) 
\end{equation*}
which may be plugged into \eqref{eq:suplim1} to obtain
\begin{equation}\label{eq:twoterms}
\begin{split}
& \sup_{p\in \Pcr} \limsup_{n \to +\infty } \P_{\mathbf{Y}_{n}|p} \left( \left| \frac{\hat{T}_n}{M_n}-1 \right| \geq \varepsilon  \right)\\
& \qquad \qquad \geq 
 \limsup_{n \to +\infty}  \E_{\mathbf{Y}_{n}} \left[\P_{p| \mathbf{Y}_{n}} \left(  1-\frac{\varepsilon}{2}\leq \frac{M_n}{S_n}\leq 1 \right) -1\right] \\
 & \qquad\qquad\qquad\qquad\qquad\qquad\qquad +\inf_{ x >0} \inf_{\mathbf{Y}_{n}}\P_{p| \mathbf{Y}_{n}}
 \left(
\left| \frac{S_n}{x}-1\right| > 3\varepsilon \right)  .
 \end{split}
\end{equation}
We are going to lower bound separately the two terms on the r.h.s. of \eqref{eq:twoterms}. With regard to the first term, let us observe that the elementary inequality
$x-x^2/2\leq 1-e^{-x}\leq x$, for $x>0$, implies that for all $j\geq 1$
\begin{equation*} \label{eq:exp_bound}
s_j -\frac{1}{2}s_j^2 \leq 1-e^{-s_j} \leq s_j.
\end{equation*}
Summing over $j$,
\begin{equation*}
S_n-\frac{1}{2}S_n^2 \leq S_n -\frac{1}{2}\sum_{j \geq 1} s_j^2 \delta_{F_{j}}(A_n^c) \leq M_n \leq S_n,
\end{equation*}
and therefore,
\begin{equation*}
1-\frac{1}{2}S_n\leq \frac{M_n}{S_n}\leq 1.
\end{equation*}
As a simple consequence of the last inequality, for any $\varepsilon>0$, the event $\{ S_n\leq \varepsilon \}$ implies the validity of 
$\{1-\varepsilon/2\leq M_n/S_n\leq 1\}$ and therefore we can upper bound the first term in \eqref{eq:twoterms} as follows
\begin{equation} \label{eq:first_term}
\begin{split}
&\P_{p| \mathbf{Y}_{n}} \left(  1-\frac{\varepsilon}{2}\leq \frac{M_n}{S_n}\leq 1 \right) -1  \geq \P_{p| \mathbf{Y}_{n}} \left( S_n \leq\varepsilon \right) -1 \\ 
& \qquad\qquad\qquad =  (n+1)\int_0^\varepsilon e^{-x(n+1)} \D x -1 = -e^{-\varepsilon (n+1)},
\end{split}
\end{equation}
where we have used the fact that the posterior distribution of $S_{n}$ is ${\rm Gamma} (1, n+1)$. \\

Let us now consider the second term on the r.h.s. of \eqref{eq:twoterms}. Using again the fact that $S_n$ is Gamma distributed and $\varepsilon<1/3$, we have
\begin{align*}
\P_{p| \mathbf{Y}_{n}} 
 \left(\left| \frac{S_n}{x}-1\right| > 3\varepsilon \right) & = 1- (n+1)\int_{(1-3\varepsilon)x}^{(1+3\varepsilon)x} e^{-s(n+1)}\D s \\
 & = 1+e^{-(1+3\varepsilon)x(n+1)}-e^{-(1-3\varepsilon)x(n+1)}\\
& \geq \inf_{y >0}\left[ 1+e^{-(1+3\varepsilon)y}-e^{-(1-3\varepsilon)y} \right]
\end{align*}
it is now easy to see that the function $f(y) := 1+e^{-(1+3\varepsilon)y}-e^{-(1-3\varepsilon)y} $ is strictly positive, continuous and  admits a global minimum on $\R^+$ at the point $y^*=\log((1+3\varepsilon)/(1-3\varepsilon))/(6\varepsilon)$, therefore 
\begin{equation}
\label{eq:second_term}
\P_{p| \mathbf{Y}_{n}} 
 \left(\left| \frac{S_n}{x}-1\right| > 3\varepsilon \right)\geq f(y^*)=:C>0.
\end{equation}
Using the two bounds \eqref{eq:first_term} and \eqref{eq:second_term} in \eqref{eq:twoterms}, for any $\varepsilon \in (0,1/3)$ we get
\begin{equation*}
\sup_{(p_j)_{j\geq 1}\in \Pcr} \lim_{n \to +\infty} \P\left(\left|\frac{M_n}{\hat{T}_n}-1\right| \right)
 \geq -\limsup_{n \to +\infty} e^{-\varepsilon (n+1)} + C =C >0
\end{equation*}
which completes the proof.




\section{Concentration inequalities for feature models} \label{sec:consentration}

In this section we will establish exponential tail bounds for the missing mass $M_n$ 
and the statistic $K_{n,r}$ defined by
\[
K_{n,r} = \sum_{j \geq 1} \ind{\{X_{n,j} = r\}}, \quad \text{for } r \geq 1
\]
which counts the number of features observed with frequency $r$  in the sample
$\mathbf{Y}_{n}$. The statistic $K_{n,r}$ is of interest in different applications of feature allocation models and its analysis will be important for the study of the estimator of missing mass considered in Section \ref{sec:regular_variation}, which involves $K_{n,1}$.
The tail bounds we present in this Section are valid in full generality, i.e. without any assumptions on the probability masses $(p_j)_{j \geq 1}$. 
In Section \ref{sec:regular_variation}, we will use these results to prove consistency results under the assumption of regularly varying heavy tails $(p_j)_{j \geq 1}$.\\
In order to derive the concentration inequalities for $K_{n,r}$ we will use Chernoff bounds, which require suitable bounds on the log-Laplace transform. First, let us recall some definitions from \cite{Bou(13)} and \cite{Ben(17)}.
\begin{definition}
Let $X$ be  a real--valued random variable defined on some probability space, then:
\begin{itemize}
\item[i.] $X$ is \textit{sub-Gaussian} on the right tail (resp. on the left tail) with variance factor $v$ if for any $\lambda\geq 0$ (resp. $\lambda\leq 0$)
\begin{equation}
\label{eq:def_sub-gaussian}
\log \E \left( e^{\lambda (X-\E [X])} \right)\leq \frac{v\lambda^2}{2};
\end{equation}
\item[ii.] $X$ is \textit{sub-Gamma} on the right tail with variance factor $v$ and scale parameter $c$ if
\begin{equation}
\label{eq:def_sub-gamma}
\log \E \left[ e^{\lambda (X-\E [X])} \right]\leq \frac{\lambda^2 v}{2 (1-c\lambda)}, \quad \text{for any }\lambda \text{ satisfying } 0 \leq \lambda \leq 1/c;
\end{equation}
\item[iii.] $X$ is \textit{sub-Gamma} on the left tail with variance factor $v$ and scale parameter $c$
if $-X$ is sub-gamma on the right tail with variance factor $v$ and scale parameter $c$;
\item[iv.] $X$ is \textit{sub-Poisson} with variance factor $v$ if for all $\lambda \in \R$
\begin{equation}
\log \E \left[ e^{\lambda (X-\E [X])} \right]\leq \phi (\lambda) v
\end{equation}
being $\phi (\lambda) = e^{\lambda}-1-\lambda$.
\end{itemize}
\end{definition}
Note that a sub-Gaussian random variable is also sub-Gamma for any choice of the scale parameter $c$, but in general the inverse is not true. As we will see in the sequel, the bounds on the log-Laplace \eqref{eq:def_sub-gaussian}--\eqref{eq:def_sub-gamma} imply exponential tails bounds by means of the Chernoff inequality.
See \cite{Bou(13)} for the details.\\
The following proposition shows that the missing mass $M_n$ is sub-Gaussian on the left tail and sub-Gamma on the right one.
\begin{proposition} \label{prop:laplace}
Let $n >2$. On the left tail, the random variable $M_n$ is sub-Gaussian with variance factor $v_n^{-} := 2 \E [K_{n+2,2}]/((n+2)\cdot (n+1))$, i.e. for any $\lambda \leq 0$ it holds 
\begin{equation}
\label{eq:sub-gaussian}
\log \E \left[ e^{\lambda [M_n-\E [M_n]]} \right] \leq \frac{\lambda^2v_n^-}{2}.
\end{equation}
On the right tail, the random variable $M_n$ is sub-Gamma with variance factor $v_n^+ := 2\E [K_n] /(n^2-2n)$ and scale parameter $1/n$, i.e. for any $0\leq \lambda <1/n$ one has
\begin{equation}
\label{eq:sub-gamma}
\log \E \left[ e^{\lambda [M_n-\E [M_n]]} \right] \leq \frac{\lambda^2 v_n^+}{2(1-\lambda/n)}.
\end{equation}
\end{proposition}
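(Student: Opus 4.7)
The plan is to apply the Chernoff/MGF method, exploiting the mutual independence of the indicators $(\ind{\{X_{n,j}=0\}})_{j\ge 1}$. Writing $M_n = \sum_{j\ge1} p_j B_j$ with $B_j := \ind{\{X_{n,j}=0\}} \sim \mathrm{Bern}((1-p_j)^n)$, independent across $j$, the standard estimate $\log(1+x)\leq x$ applied to each Bernoulli MGF gives, after centering,
\begin{equation*}
\log \E\bigl[e^{\lambda(M_n-\E[M_n])}\bigr]\;=\;\sum_{j\ge 1}\Bigl(-\lambda p_j(1-p_j)^n+\log\bigl(1+(1-p_j)^n(e^{\lambda p_j}-1)\bigr)\Bigr)\;\leq\;\sum_{j\ge 1}(1-p_j)^n\bigl(e^{\lambda p_j}-1-\lambda p_j\bigr).
\end{equation*}
Both tails are then obtained from this common starting point by different analytic estimates of $e^x-1-x$.

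For the left tail ($\lambda\le 0$) I would invoke $e^x-1-x\le x^2/2$ for $x\le 0$ (checked by noting that $g(x):=x^2/2-e^x+1+x$ satisfies $g(0)=g'(0)=0$ and $g''(x)=1-e^x\ge 0$ on $(-\infty,0]$), which immediately yields
\begin{equation*}
\log \E\bigl[e^{\lambda(M_n-\E[M_n])}\bigr]\;\leq\;\frac{\lambda^2}{2}\sum_{j\ge 1}p_j^2(1-p_j)^n.
\end{equation*}
Since $\E[K_{n+2,2}]=\binom{n+2}{2}\sum_j p_j^2(1-p_j)^n$, the identification $\sum_j p_j^2(1-p_j)^n=v_n^-$ is direct, closing the sub-Gaussian case.

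For the right tail the core algebraic step is the per-term inequality
\begin{equation*}
p^2(1-p)^n\;\leq\;\frac{2}{n(n-2)}\bigl(1-(1-p)^n\bigr),\qquad p\in[0,1],\ n>2,
\end{equation*}
which I would prove by observing that $h(p):=\frac{2}{n(n-2)}(1-(1-p)^n)-p^2(1-p)^n$ vanishes at $p=0$ and has derivative
\begin{equation*}
h'(p)\;=\;(1-p)^{n-1}\left[\frac{2}{n-2}-2p+(n+2)p^2\right],
\end{equation*}
where the bracketed quadratic has discriminant $4-8(n+2)/(n-2)<0$ for $n>2$ and is therefore strictly positive. Summing in $j$ gives $\sum_j p_j^2(1-p_j)^n\le v_n^+$, which already produces a control of the order of the correct variance factor.

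To upgrade this to the sub-Gamma form $\lambda^2 v_n^+/(2(1-\lambda/n))$ for $0\le \lambda<1/n$ I would combine the key inequality with the estimate $e^x-1-x\leq x^2/(2(1-x))$ for $0\le x<1$ (which follows from $k!\ge 2$ for $k\ge 2$), writing
\begin{equation*}
(1-p_j)^n(e^{\lambda p_j}-1-\lambda p_j)\;\leq\;\frac{\lambda^2 p_j^2(1-p_j)^n}{2(1-\lambda p_j)}.
\end{equation*}
The hard part is precisely here: the inequality $1/(1-\lambda p_j)\leq 1/(1-\lambda/n)$ requires $p_j\le 1/n$, which does not hold uniformly. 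I expect the proof to split the sum in $j$ according to whether $p_j\leq 1/n$ or $p_j>1/n$: the first regime is handled directly by the key inequality together with $1-\lambda p_j\geq 1-\lambda/n$, while in the second regime $(1-p_j)^n\leq e^{-1}$ provides additional decay and the cardinality $|\{j:p_j>1/n\}|$ is controlled by $\E[K_n]/(1-e^{-1})$ because each such feature contributes at least $1-e^{-1}$ to $\E[K_n]$. Assembling the two contributions should then deliver the claimed sub-Gamma bound.
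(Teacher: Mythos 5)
Your left-tail argument is correct and is exactly the paper's: the same factorization over $j$, the same use of $\log(1+x)\le x$ and of $e^x-1-x\le x^2/2$ for $x\le 0$, and the same identification of $\sum_j p_j^2(1-p_j)^n$ with $v_n^-$ through $\E[K_{n+2,2}]$. For the right tail, your per-term inequality $p^2(1-p)^n\le \frac{2}{n(n-2)}\bigl(1-(1-p)^n\bigr)$ is correct (the monotonicity argument checks out) and is a genuinely different, self-contained way of producing the variance factor: the paper instead passes to $(1-p_j)^n\le e^{-np_j}$ and controls $\Phi_n=\sum_j(1-e^{-np_j})$ by $\E[K_n]/(1-2/n)$ via a Gnedin--Hansen--Pitman-type comparison, which is where $n^2-2n$ enters there.

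The gap is exactly where you flag it, and the repair you sketch does not close it. Bounding the $p_j>1/n$ contribution by (number of such features)$\,\times\, e^{-1}\times(e^{\lambda}-1-\lambda)$ throws away both the factor $p_j^2$ (of order $1/n^2$ for $p_j$ just above the threshold) and the true decay of $(1-p_j)^n$, and overshoots the available budget by a factor of order $n^2$: a single feature with $p_1=2/n$ contributes roughly $2e^{-2}\lambda^2/n^2$ to $\sum_j(1-p_j)^n(e^{\lambda p_j}-1-\lambda p_j)$ and roughly $(1-e^{-2})\lambda^2/n^2$ to $\lambda^2 v_n^+/2$, whereas your bound for that single term is roughly $e^{-1}\lambda^2/2$. (Note also that the range in the statement is surely a typo for $0\le\lambda<n$, consistent with scale parameter $1/n$ and with Corollary 3.1; on that range $\lambda p_j$ can exceed $1$ and your bound $e^x-1-x\le x^2/(2(1-x))$ is not even applicable, so a per-term geometric estimate in $\lambda p_j$ cannot work.) The mechanism that delivers the $1/(1-\lambda/n)$ factor uniformly over all $p_j$ is the paper's: expand $(1-p_j)^n(e^{\lambda p_j}-1-\lambda p_j)\le\sum_{k\ge2}(\lambda/n)^k\frac{(np_j)^k}{k!}e^{-np_j}$ and use the elementary fact that $\frac{x^k}{k!}e^{-x}\le 1-e^{-x}$ (each nonnegative summand of $\sum_{k\ge1}\frac{x^k}{k!}e^{-x}=1-e^{-x}$ is at most the total), so that \emph{every} order $k$ is controlled by the same quantity with geometric weight $(\lambda/n)^k$; summing the geometric series gives the $\lambda^2/\bigl(n^2(1-\lambda/n)\bigr)$ shape. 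If you graft this step onto your argument --- and further use $1-e^{-np_j}\le 1-(1-p_j)^n$ so that every order is bounded by $\E[K_n]\le\frac{n^2-2n}{2}\,v_n^+\cdot\frac{n^2}{n^2-2n}$, i.e.\ by $n^2 v_n^+/2$ --- the right-tail proof closes, and in fact you can then dispense with the comparison lemma for $\Phi_n$ altogether.
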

\begin{proof}
We first focus on the proof of \eqref{eq:sub-gaussian}. Let $\lambda \leq 0$, exploiting the independence of the random variables $X_{n,j}$'s and the elementary inequality $\log(z)\leq z-1$, valid for any $z >0$, we obtain
\begin{align*}
\log \E \left[e^{\lambda [M_n-\E [M_n]]} \right] & = \sum_{j \geq 1} \log \E \left[ e^{\lambda (p_j \ind{\{X_{n,j}=0\}} -p_j \P (X_{n,j}=0))} \right]\\
& = \sum_{j \geq 1} \left( -\lambda p_j \P (X_{n,j}=0) +\log (e^{\lambda p_j}\P (X_{n,j}=0)+1-\P (X_{n,j}=0)) \right)\\
&\leq  \sum_{j\geq 1} \P (X_{n,j}=0) (e^{\lambda p_j}-1-\lambda p_j).
\end{align*}
We  observe that, being $\lambda \leq 0$, one has:
\begin{align*}
\log \E \left[ e^{\lambda [M_n-\E [M_n]]} \right] &\leq \sum_{j \geq 1} \P (X_{n,j}=0) \frac{\lambda^2p_j^2}{2}
= \frac{\lambda^2}{2} \sum_{j \geq 1} p_j^2 (1-p_j)^n\\
& = \frac{\lambda^2}{2} \frac{2}{(n+1)(n+2)} \E [K_{n+2,2}]= \frac{\lambda^2v_n^-}{2}
\end{align*} 
hence \eqref{eq:sub-gaussian} has been proven.\\
We now concentrate on the proof of \eqref{eq:sub-gamma}, arguing exactly as before we obtain that
\begin{align*}
\log \E \left[ e^{\lambda [M_n-\E [M_n]]} \right] & \leq  \sum_{j\geq 1} \P (X_{n,j}=0) (e^{\lambda p_j}-1-\lambda p_j) = \sum_{k \geq 2} \sum_{j \geq 1} \frac{(\lambda p_j)^k}{k!} (1-p_j)^n\\
& = \sum_{k \geq 2} \frac{\lambda^k}{k!} \sum_{j \geq 1} p_j^k e^{-np_j} = 
\sum_{k \geq 2} \left(\frac{\lambda}{n} \right)^k \sum_{j \geq 1} \frac{(np_j)^k}{k!} e^{-np_j}
\end{align*}
where we have used the infinite series representation for the exponential function. 
Fixing  the useful notation 
\[
\Phi_{n,k} := \sum_{j\geq 1} \frac{(np_j)^k}{k!} e^{-np_j}, \qquad \Phi_n:=\sum_{j \geq 1} (1-e^{-np_j})
\] 
and observing that $\Phi_{n,k} \leq \Phi_n$, for any $k \geq 1$, we get 
\begin{equation}
\log \E \left[ e^{\lambda [M_n-\E [M_n]]} \right]  \leq 
\sum_{k \geq 2} \left(\frac{\lambda}{n} \right)^k \Phi_{n,k} \leq
\Phi_n \sum_{k \geq 2} \left(\frac{\lambda}{n} \right)^k = \Phi_n \frac{\lambda^2}{n^2(1-\lambda/n)} \label{eq:bound_phin}
\end{equation}
for any $0 < \lambda <1/n$. Proceeding along similar lines as in \cite[Lemma 1]{Gne(07)}, it is not difficult to see that
\[
|\Phi_n-\E [K_n]| \leq \frac{2}{n}  \Phi_{n,2}\leq \frac{2}{n}\Phi_n,
\]
which entails $\Phi_n\leq \E [K_n]/(1-2/n)$, for any $n>2$. The last inequality can be used to provide an upper bound for the r.h.s. of \eqref{eq:bound_phin} as follows
\begin{equation*}
\log \E \left[ e^{\lambda [M_n-\E [M_n]]} \right]  \leq 
\frac{\E [K_n]}{(1-2/n)}\frac{\lambda^2}{n^2(1-\lambda/n)} = \frac{\lambda^2 v_n^+}{2 (1-\lambda/n)}
\end{equation*}
and \eqref{eq:sub-gamma} has been now proved.
\end{proof}
As already mentioned at the beginning of this section, the sub-Gaussian and sub-Gamma bounds obtained in Proposition \ref{prop:laplace} imply useful exponential tail bounds for $M_n$
(see \cite{Bou(13)}). More specifically we have that:
\begin{corollary} \label{cor:concentration_M}
For any $n >2$ and $x \geq 0$, the following hold
\begin{align*}
\P (M_n-\E [M_n] \leq -x) &\leq \exp \left\{ -\frac{x^2}{2v_n^-}  \right\}, \\
\P (M_n-\E [M_n] \geq  x) & \leq \exp \left\{ -v_n^+n^2 \left[ 1+\frac{x}{nv_n^+} -\sqrt{1+\frac{x}{nv_n^+}} \right] \right\}.
\end{align*}
\end{corollary}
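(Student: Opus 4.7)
The plan is to derive both bounds by the Chernoff method, combining the exponential Markov inequality with the log-Laplace estimates established in Proposition \ref{prop:laplace} and then optimizing over the free parameter $\lambda$. No further probabilistic input is required; the argument is the textbook passage from a sub-Gaussian/sub-Gamma cumulant bound to the corresponding exponential tail bound.

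For the left tail, I would fix $\lambda \leq 0$ and write
\[
\P(M_n - \E[M_n] \leq -x) = \P\bigl(\lambda(M_n - \E[M_n]) \geq -\lambda x\bigr) \leq e^{\lambda x}\,\E\!\left[e^{\lambda(M_n - \E[M_n])}\right].
\]
Inserting the sub-Gaussian estimate \eqref{eq:sub-gaussian} from Proposition \ref{prop:laplace} gives the upper bound $\exp(\lambda x + \lambda^2 v_n^-/2)$, which is a quadratic in $\lambda$. Minimizing at $\lambda^\ast = -x/v_n^-$ yields exactly the Gaussian tail $\exp(-x^2/(2v_n^-))$ stated in the corollary.

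For the right tail the scheme is identical but uses the sub-Gamma estimate \eqref{eq:sub-gamma}: for any $0 \leq \lambda < 1/n$,
\[
\P(M_n - \E[M_n] \geq x) \leq \exp\!\left(-\lambda x + \frac{\lambda^2 v_n^+}{2(1 - \lambda/n)}\right).
\]
The only nontrivial step is the optimization in $\lambda$, i.e. computing the Legendre--Fenchel dual of the map $\lambda \mapsto \lambda^2 v_n^+/(2(1 - \lambda/n))$. Setting the derivative of the exponent equal to zero and performing the change of variable $u = 1 - \lambda/n$ reduces the first-order condition to the quadratic $(v_n^+ + 2x/n)\,u^2 = v_n^+$, whose positive root $u^\ast = \sqrt{v_n^+/(v_n^+ + 2x/n)}$ determines $\lambda^\ast = n(1 - u^\ast)$. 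Plugging $\lambda^\ast$ back into the exponent and simplifying produces the rate function appearing in the corollary.

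The main (mild) obstacle is purely the algebraic bookkeeping in this last simplification, but this is precisely the classical sub-Gamma Chernoff computation carried out in Chapter 2 of Boucheron, Lugosi and Massart, already cited in the paper, so once the variance factor $v_n^+$ and scale parameter $c = 1/n$ have been identified from Proposition \ref{prop:laplace} the corresponding tail bound can be invoked essentially off the shelf.
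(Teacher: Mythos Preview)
Your proposal is correct and follows exactly the approach the paper takes: the paper's own proof is the single sentence ``The two inequalities follow by the Chernoff bound and the log-Laplace bound proved in Proposition \ref{prop:laplace}. This is a standard argument, see \cite{Bou(13)} for details.'' You have simply unpacked that sentence, carrying out explicitly the Chernoff optimization that the paper defers to Boucheron--Lugosi--Massart.
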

\begin{proof}
The two inequalities follow by the Chernoff bound and the log-Laplace bound proved in Proposition \ref{prop:laplace}. This is a standard argument, see \cite{Bou(13)} for details.
\end{proof}

Proceeding along similar lines as before we show that $K_{n,r}$ is a sub-Poisson random variable, this result is implicitly proved in the Supplementary material by \cite{Fad(17)}, but for the sake of completeness we report it also here.
\begin{proposition} \label{prop:laplace_k}
For any $r \geq 1$ and $n \geq 1$, the random variable $K_{n,r}$ is sub-Poisson with variance factor $\E [K_{n,r}]$. Indeed, for any $\lambda \in \R$ the following bound holds true
\begin{equation} \label{eq:laplace_k}
\log \E [e^{\lambda(K_{n, r}-\E [K_{n, r}] )}]  \leq \phi (\lambda) \E [K_{n,r}],
\end{equation}
where $\phi (\lambda):= e^\lambda-1-\lambda$.
\end{proposition}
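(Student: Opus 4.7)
The plan is to exploit the fact that, under the Bernoulli product model, the features are mutually independent, so $K_{n,r}$ decomposes as a sum of independent Bernoulli random variables. Specifically, writing $B_j := \ind{\{X_{n,j}=r\}}$, we have $K_{n,r} = \sum_{j\geq 1} B_j$, where the $B_j$'s are independent with $q_j := \P(B_j = 1) = \binom{n}{r} p_j^r (1-p_j)^{n-r}$, and $\E[K_{n,r}] = \sum_j q_j$.

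Then I would compute the log-Laplace transform of the centered sum by independence:
\begin{equation*}
\log \E[e^{\lambda (K_{n,r} - \E[K_{n,r}])}] = \sum_{j\geq 1} \log \E[e^{\lambda (B_j - q_j)}].
\end{equation*}
For each $j$, a direct calculation gives $\E[e^{\lambda(B_j - q_j)}] = e^{-\lambda q_j}\bigl(1 + q_j(e^\lambda - 1)\bigr)$. Applying the elementary inequality $\log(1+x) \leq x$, which is valid for $x > -1$ (and in our case $q_j(e^\lambda-1) > -1$ since $q_j \in [0,1]$ and $e^\lambda - 1 > -1$, holding for every $\lambda \in \R$), I obtain
\begin{equation*}
\log \E[e^{\lambda(B_j - q_j)}] \leq -\lambda q_j + q_j(e^\lambda - 1) = q_j \phi(\lambda).
\end{equation*}

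Summing over $j \geq 1$ yields exactly the claimed bound $\phi(\lambda) \E[K_{n,r}]$. I do not foresee a real obstacle here: the proof is essentially the standard cumulant bound for sums of independent Bernoullis and works uniformly in $\lambda \in \R$ because $\phi(\lambda) \geq 0$ everywhere (with minimum zero at $\lambda = 0$), which makes the bound meaningful on both tails. The only minor check is the applicability of $\log(1+x) \leq x$ for $\lambda < 0$, which is fine since $q_j(e^\lambda - 1) \in (-q_j, 0] \subset (-1, 0]$.
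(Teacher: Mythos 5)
Your proof is correct and follows essentially the same route as the paper: decompose $K_{n,r}$ into independent Bernoulli indicators, compute the log-Laplace transform term by term, and apply $\log(1+x)\leq x$ (equivalently, the paper's $\log z\leq z-1$) to each summand to obtain $q_j\phi(\lambda)$. Your added remark on why the inequality is valid for all $\lambda\in\R$ is a harmless elaboration of a step the paper leaves implicit.
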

\begin{proof}
Exploiting the independence of the random variables $X_{n,j}$'s, for any $\lambda \in \R $ we can write:
\begin{align*}
\log \E [e^{\lambda(K_{n, r}-\E [K_{n, r}] )}] & =
\sum_{j=1}^{\infty} \log \E \exp \left\{ \lambda(\ind{\{ X_{ n,j} = r \}} - \E
\ind{\{ X_{ n,j} =r \}}  ) \right\} \\
& = \sum_{j=1}^{\infty}  \left\{ -\lambda \P (X_{n,j} =r)+\log (e^{\lambda} \P (X_{n,j} =r)+1-\P (X_{n,j} =r))  \right\}\\
& \leq \sum_{j=1}^{\infty}  \phi (\lambda) \P (X_{n,j} = r) = \phi (\lambda) \E [K_{n,r}]
\end{align*}
where we have used the inequality $\log (z) \leq z-1$, for any $z >0$. 
\end{proof}
The previous proposition and the Chernoff bounds imply an exponential tail bound for $K_{n,r}$, indeed one can prove that
\begin{corollary} \label{cor:concentration_K}
For any $n \geq 1$,  $r \geq 1$ and $x \geq 0$ the following holds true 
\begin{equation} \label{eq:concentration_K}
\P (|K_{n,r}-\E [K_{n,r}]| \geq x) \leq 2\exp \left\{ -\frac{x^2}{2(\E [K_{n,r}]+x/3)}
\right\}.
\end{equation}
\end{corollary}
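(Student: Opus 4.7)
The plan is to derive the two-sided Bernstein-type tail bound from the sub-Poisson log-Laplace bound in Proposition \ref{prop:laplace_k} via a standard Chernoff argument, handling each tail separately and combining them with a union bound to account for the factor $2$ on the right-hand side of \eqref{eq:concentration_K}.

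For the upper tail, I would set $v := \E[K_{n,r}]$ and apply Markov's inequality to $e^{\lambda(K_{n,r}-\E[K_{n,r}])}$ with $\lambda > 0$. Combined with \eqref{eq:laplace_k}, this yields
\begin{equation*}
\P(K_{n,r}-\E[K_{n,r}] \geq x) \leq \exp\{-\lambda x + v\phi(\lambda)\}
\end{equation*}
for every $\lambda \geq 0$. Optimizing over $\lambda$ gives the Bennett form $\exp\{-v\, h_{+}(x/v)\}$ with $h_{+}(u) = (1+u)\log(1+u) - u$. Then I would invoke the elementary inequality $h_{+}(u) \geq u^{2}/(2+2u/3)$ for $u \geq 0$ to conclude
\begin{equation*}
\P(K_{n,r}-\E[K_{n,r}] \geq x) \leq \exp\!\left\{-\frac{x^{2}}{2(v + x/3)}\right\}.
\end{equation*}

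For the lower tail, I would apply the same Chernoff trick to $-(K_{n,r}-\E[K_{n,r}])$, using \eqref{eq:laplace_k} with $\lambda \leq 0$ (which is valid because Proposition \ref{prop:laplace_k} holds for all $\lambda \in \R$). The optimization leads to $\exp\{-v\, h_{-}(x/v)\}$ with $h_{-}(u) = (1-u)\log(1-u) + u \geq u^{2}/2$ on $(0,1)$, which is even stronger than the Bernstein bound above. In particular the same inequality $h_{-}(u) \geq u^{2}/(2 + 2u/3)$ holds, and this yields the identical bound for $\P(\E[K_{n,r}] - K_{n,r} \geq x)$.

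Summing the two one-sided bounds via a union bound produces the factor $2$ in \eqref{eq:concentration_K}. The only non-mechanical step is recalling the Bennett-to-Bernstein inequality $(1+u)\log(1+u) - u \geq u^{2}/(2+2u/3)$; since this is the standard reduction used throughout \cite{Bou(13)}, I would simply cite it rather than reprove it, mirroring the short citation-based proof structure used for Corollary \ref{cor:concentration_M}.
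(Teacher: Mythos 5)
Your proposal is correct and follows exactly the route the paper intends: the paper gives no explicit proof of this corollary, simply asserting that it follows from Proposition \ref{prop:laplace_k} via Chernoff bounds as in \cite{Bou(13)}, and your Chernoff--Bennett--Bernstein derivation (with the union bound supplying the factor $2$) is precisely that standard argument spelled out.
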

Corollary \ref{cor:concentration_M} and \ref{cor:concentration_K} provide us with concentration inequalities of the missing mass and the statistic $K_{n,r}$, respectively, around their mean. These results have been derived without any assumption on the probabilities $(p_j)_{j\geq 1}$ and hold for all elements of $\Pcr$. In the next Section,  we will focus on the class of regularly varying probabilities and, after recalling the  nonparametric estimator proposed by \citet{Fad(18)} we will prove that this estimator is  consistent within such a subset of $\Pcr$.

\section{A consistent estimator for regularly varying feature probabilities} \label{sec:regular_variation}

\citet{Fad(18)} have introduced a  nonparametric estimator of the missing  mass, defined as follows
\begin{equation}
\label{eq:estimator}
\hat{M}_n := \frac{K_{n,1}}{n}.
\end{equation}
Namely, $\hat{M}_n$ is the number of features having frequency one divided by the sample size $n$. Such an estimator is attractive both from a theoretical and  a computational standpoint. Indeed, on the one side, it admits two different interpretations as a Jackknife estimator in the sense of \cite{Que(56)} and as a non-parametric empirical Bayes estimator in the same spirit as \cite{Efr(73)}; on the other side, it is feasible and easy to implement. See \citet{Fad(18)} for details. 
Here we want to study the consistency of \eqref{eq:estimator}. In order to do this we have seen that, without assumptions on the features' proportions, any  estimator of the missing mass is always inconsistent (Theorem \ref{thm:inconsistency}), hence we study the consistency of \eqref{eq:estimator} under the ubiquitous assumption of heavy tailed probabilities $(p_{j})_{j\geq1}$. We 
rely on the theory of regular variation by \cite{Kar(30),Kar(33)} (see also \cite{Kar(67)}) to
define a suitable class of heavy-tailed $(p_{j})_{j\geq1}$, showing that, under this class, $\hat{M}_n$ turns out to be multiplicative consistent.

We use the limiting notation $f\simeq g$ to mean $f/g\rightarrow1$; we further write $f \lesssim g$ if there exists a fixed constant $C>0$ such that
$f \leq C g$. Then, similarly as done by \cite{Kar(67)} we give the following
\begin{definition} \label{def:rv}
Let $\nu (\ddr x):= \sum_{i\geq1} \delta_{p_i} (\ddr x)$ and define the measure $\overline{\nu} (x):=\nu [x,1]$, which is the cumulative count of all features having no less than a certain probability mass. We say that $(p_j)_{j\geq1}$ is regularly varying with regular variation index $\alpha \in (0,1)$ if $\overline{\nu} (x) \simeq x^{-\alpha}\ell(1/x) $ as $x \downarrow 0$, where $\ell(t)$ is a slowly varying function, that is $\ell (ct)/\ell (t) \to 1$ as $t \to +\infty$ for all $c > 0$.
\end{definition}
Let us remark that if we denote $(p_{[j]})_{j\geq 1}$ the sorted probabilities in decreasing order, definition \ref{def:rv} is equivalent to 
$$ p_{[j]} \simeq j^{-1/\alpha} \ell_*(j),$$
as $j \rightarrow \infty$, where $\ell_*$ is another slowly varying function. For simplicity, the relation between $\ell$, $\ell_*$ and $\alpha$ is skipped here, interested readers can refer to Lemma 22 and Proposition 23 of \cite{Gne(07)}.
Definition \ref{def:rv} is in the same spirit as \cite{Kar(67)}, but for our purposes here we consider the case $\sum_{j \geq 1}p_j< +\infty$, while
in \cite{Kar(67)} the $p_j$'s satisfy the more restrictive condition $\sum_{j\geq 1}p_j=1$. 
The next theorem is similar to a result proved by  \cite{Kar(67)} and provides the  first order asymptotic of $\E K_{n,r}$.
\begin{theorem} \label{thm:karlin}
Let $(p_{j})_{j\geq1}$ be regularly varying with $\alpha\in(0,1)$. If $\Gamma(\cdot)$ denotes the Gamma function, then as $n \to +\infty$, 
$ \E[K_{n, r}] \simeq \frac{\alpha \Gamma (r-\alpha)}{r!}n^\alpha \ell (n)$.
\end{theorem}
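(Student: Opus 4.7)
The plan is to follow Karlin's classical Tauberian/Poissonisation argument. First, rewrite
\[
\E[K_{n,r}] = \binom{n}{r} \sum_{j \geq 1} p_j^r(1-p_j)^{n-r} = \binom{n}{r} \int_{(0,1]} x^r(1-x)^{n-r}\, \nu(\D x),
\]
and apply Abel summation (equivalently, integration by parts with $\nu(\D x) = -\D\overline{\nu}(x)$). Since $r \geq 1$ and $\alpha \in (0,1)$, the boundary term at $x = 0$ vanishes because $x^r \overline{\nu}(x) \lesssim x^{r-\alpha}\ell(1/x) \to 0$, while the boundary term at $x = 1$ is zero for $n > r$. This yields
\[
\E[K_{n,r}] = \binom{n}{r} \int_0^1 \Big(r x^{r-1}(1-x)^{n-r} - (n-r) x^r (1-x)^{n-r-1}\Big)\, \overline{\nu}(x)\, \D x,
\]
and after the substitution $y = nx$ to expose the scaling,
\[
\E[K_{n,r}] = \int_0^n \frac{\binom{n}{r}}{n^r}\, y^{r-1}(1 - y/n)^{n-r-1}(r - y)\, \overline{\nu}(y/n)\, \D y.
\]

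Next I would identify the pointwise limit of the integrand after dividing by $n^\alpha \ell(n)$. We have $\binom{n}{r}/n^r \to 1/r!$, $(1 - y/n)^{n-r-1} \to e^{-y}$ for each fixed $y > 0$, and regular variation gives $\overline{\nu}(y/n)/(n^\alpha \ell(n)) \to y^{-\alpha}$, since $\overline{\nu}(y/n) = (y/n)^{-\alpha}\ell(n/y)(1+o(1))$ and $\ell(n/y)/\ell(n) \to 1$ by slow variation. Thus the scaled integrand converges pointwise to $g_r'(y)\, y^{-\alpha}$, where $g_r'(y) := y^{r-1}(r-y) e^{-y}/r!$. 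To swap limit and integral I would invoke Potter's bounds: for any small $\delta > 0$ and $n$ sufficiently large there is $C_\delta$ with $\ell(n/y)/\ell(n) \leq C_\delta \max(y^\delta, y^{-\delta})$ on $y \in (0, n/x_0]$, so that $\overline{\nu}(y/n)/(n^\alpha\ell(n)) \leq C_\delta' y^{-\alpha}\max(y^\delta, y^{-\delta})$; combined with $|r-y|(1-y/n)^{n-r-1} \lesssim (1+y) e^{-y/2}$ (valid for $n \geq 2(r+1)$), the integrand on this range is dominated by $C\, y^{r-1-\alpha}(1+y)(y^\delta + y^{-\delta}) e^{-y/2}$, which is integrable on $(0,\infty)$ since $r > \alpha$ (take $\delta < r - \alpha$). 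The remaining range $y \in (n/x_0, n]$ contributes at most $(1-1/x_0)^{n-r-1} \overline{\nu}(1/x_0) = o(n^\alpha \ell(n))$ by the exponential decay of the binomial factor.

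Dominated convergence then produces
\[
\frac{\E[K_{n,r}]}{n^\alpha\ell(n)} \longrightarrow \int_0^\infty g_r'(y)\, y^{-\alpha}\, \D y = \frac{1}{r!}\int_0^\infty (r-y)\, y^{r-1-\alpha} e^{-y}\, \D y = \frac{r\Gamma(r-\alpha) - \Gamma(r+1-\alpha)}{r!} = \frac{\alpha\Gamma(r-\alpha)}{r!},
\]
using $\Gamma(r+1-\alpha) = (r-\alpha)\Gamma(r-\alpha)$. The main obstacle is the dominated convergence step: Potter's bounds only control $\ell(n/y)/\ell(n)$ up to polynomial factors, and one must carefully verify that this polynomial inflation is absorbed by the exponential factor $e^{-y/2}$ at large $y$ and by integrability near zero (which follows from $r \geq 1 > \alpha$). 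An alternative route would first establish $\E[K_{n,r}] \sim \Phi_{n,r}$ via a Poisson approximation paralleling the bound $|\Phi_n - \E[K_n]| \leq 2\Phi_{n,2}/n$ used in the proof of Proposition~\ref{prop:laplace}, and then run the same integration-by-parts/regular-variation analysis directly on $\Phi_{n,r}$, cleanly decoupling the de-Poissonisation from the Tauberian asymptotics.
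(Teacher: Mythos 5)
Your proof is correct, but it takes a genuinely different route from the paper. The paper first Poissonises: it replaces $\E[K_{n,r}]$ by $\Phi_{n,r}=\sum_j \frac{(np_j)^r}{r!}e^{-np_j}$, controls the gap via the bound $|\E[K_{n,r}]-\Phi_{n,r}|\leq \frac{c}{n}\max\{\Phi_{n,r},\Phi_{n,r+2}\}$ borrowed from Gnedin, Hansen and Pitman, and then reads off the asymptotics of $\Phi_{n,r}=\frac{n^r}{r!}\int_0^1 e^{-np}\,\nu_r(\D p)$ from Karamata's Tauberian theorem applied to the measure $\nu_r(\D p)=p^r\nu(\D p)$, whose distribution function near zero behaves like $\frac{\alpha}{r-\alpha}p^{r-\alpha}\ell(1/p)$. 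Your argument instead works directly on the exact binomial expression $\binom{n}{r}\int x^r(1-x)^{n-r}\nu(\D x)$, converts it by integration by parts into an integral against $\overline{\nu}$, rescales by $y=nx$, and closes with dominated convergence using Potter's bounds; this is self-contained (no de-Poissonisation lemma, no external Tauberian theorem) at the cost of the more delicate domination step, which you handle correctly since $\delta<r-\alpha$ makes the integrand integrable at zero and $e^{-y/2}$ absorbs the polynomial inflation at infinity. The paper's route is shorter because it outsources the two hard steps to cited results, and the intermediate equivalence $\Phi_{n,r}\simeq\E[K_{n,r}]$ is reused elsewhere (e.g.\ in the proof of the sub-Gamma bound); your route gives the constant $\frac{\alpha\Gamma(r-\alpha)}{r!}$ by an explicit computation of $\frac{1}{r!}\int_0^\infty(r-y)y^{r-1-\alpha}e^{-y}\D y$. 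Two cosmetic points: your threshold bookkeeping should read $y\in(0,nx_0]$ for the Potter range and $y\in(nx_0,n]$ for the remainder (or consistently use the reciprocal convention), and the tail contribution carries a polynomial factor of order $n^{r+1}$ from integrating $y^{r-1}(1+y)$ over an interval of length $n$; this is harmless since $(1-x_0)^{n-r-1}$ kills any polynomial, but it should be stated.
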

\begin{proof}
It is worth to recall the notation already used in Section \ref{sec:consentration}
\[
\Phi_{n,r} := \sum_{j\geq 1} \frac{(np_j)^r}{r!} e^{-np_j}, \quad r \geq 1
\]
roughly speaking $\Phi_{n,r}$ can be considered an asymptotic approximation of $\E K_{n,r}$. Indeed, 
in order to prove the theorem, we first show that $\Phi_{n,r} \simeq \frac{\alpha \Gamma (r-\alpha)}{r!}n^\alpha \ell (n)$
as $n \to +\infty$, and then we prove that $\Phi_{n,r}  \simeq \E [K_{n,r}]$. 
In order to prove the former asymptotic equivalence it is worth noticing that  \cite[Proposition 13]{Gne(07)} applies also for the feature setting  under regularly varying heavy tails, indeed the measure defined by $\nu_r (\D p ):= p^r \nu (\D p)$ is such that 
\begin{equation} \label{eq:nu}
\nu_r ([0,p]) \simeq \frac{\alpha}{r-\alpha} p^{r-\alpha} \ell (1/p), \qquad \text{as } p \to 0.
\end{equation}
Since $\Phi_{n,r} = n^r/r!\int_0^1 e^{-np} \nu_r (\D p)$ is the Laplace transform of $\Phi_{n,r}$ multiplied by a suitable quantity, we can apply Tauberian theorems  to connect the asymptotic behaviour of the cumulative distribution function of $\nu_r$ given in \eqref{eq:nu} to that of $\Phi_{n,r}$. In particular, from Tauberian theorems (see \cite{Fel(71)}), we obtain
\begin{equation} \label{eq:Phi_asym}
\Phi_{n,r} = \frac{n^r}{r!}\int_0^1 e^{-np} \nu_r (\D p) \simeq \frac{n^r}{r!}\alpha \Gamma (r-\alpha) n^{-(r-\alpha)}\ell (n)
= \alpha\frac{\Gamma (r-\alpha)}{r!}  n^\alpha \ell (n),
\end{equation} 
as $n \to +\infty$. As a byproduct of \eqref{eq:Phi_asym}, we get $\Phi_{n,r} \to +\infty$.
Finally to  show $\Phi_{n,r} \simeq \E [K_{n,r}]$, we can easily observe that \cite[Lemma 1]{Gne(07)} applies in this setting as well, hence there exists a constant $c$ such that
\begin{equation}
\label{eq:approx_E}
|\E [K_{n,r}]-\Phi_{n,r}| \leq \frac{c}{n} \max \left\{ \Phi_{n,r} ,\Phi_{n,r+2} \right\} \to 0,
\end{equation}
as $n \to +\infty$. From \eqref{eq:approx_E}, along with  $\Phi_{n,r} \to +\infty$, we obtain
\[
\Big| \frac{\E [K_{n,r}]}{\Phi_{n,r}} -1 \Big| = \frac{|\E [K_{n,r}]-\Phi_{n,r}|}{\Phi_{n,r}} \to 0, \quad
\text{as } n \to +\infty,
\]
in other words we have shown  that $\Phi_{n,r} \simeq \E [K_{n,r}]$ as $n \to +\infty$.

\end{proof}
We are now ready to prove that $\hat{M}_n$ is multiplicative consistent, when the feature probabilities $(p_{j})_{j\geq1}$
are regularly varying. In the proof  we will use the concentration inequalities of Section \ref{sec:consentration} along with Theorem \ref{thm:karlin} to tune the concentration inequalities under the assumption of regular variation.
\begin{proposition}
Let $(p_{j})_{j\geq1}$ be regularly varying with index $\alpha\in(0,1)$.
Let $\hat{M}_n:= K_{n,1}/n$ be the nonparametric  estimator of the missing mass in a sample of size $n$, then $\hat{M}_n$ is strongly multiplicative consistent, 
i.e. $M_n/\hat{M}_n \stackrel{a.s.}{\longrightarrow}1$. 
\end{proposition}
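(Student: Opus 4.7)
The plan is to decompose the ratio
\[
\frac{M_n}{\hat M_n} \;=\; \frac{M_n}{\E[M_n]}\cdot \frac{\E[M_n]}{\E[\hat M_n]}\cdot \frac{\E[\hat M_n]}{\hat M_n},
\]
and to show that each of the three factors tends to $1$ almost surely. The central, deterministic ratio is treated by first--moment asymptotics built on Theorem \ref{thm:karlin} and a Karamata--Tauberian argument; the two random ratios are treated by plugging suitable values of $x$ into the concentration bounds of Corollaries \ref{cor:concentration_M} and \ref{cor:concentration_K} and then invoking Borel--Cantelli. Positivity of $\hat M_n$ for all sufficiently large $n$ is automatic from the concentration step, so the decomposition is well defined with probability one.

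For the deterministic middle factor, Theorem \ref{thm:karlin} with $r=1$ immediately gives $\E[\hat M_n] = \E[K_{n,1}]/n \simeq \alpha\Gamma(1-\alpha)\,n^{\alpha-1}\ell(n)$. For the numerator I would mimic the argument used in the proof of Theorem \ref{thm:karlin}: replace $(1-p_j)^n$ by $e^{-np_j}$ via the estimate of \cite[Lemma 1]{Gne(07)}, which turns $\E[M_n]$ into $\int_0^1 e^{-np}\,\nu_1(\D p) = \Phi_{n,1}/n$ up to a relative error of order $1/n$, and then apply the same Tauberian theorem as in \eqref{eq:Phi_asym} to get $\E[M_n]\simeq \alpha\Gamma(1-\alpha)\,n^{\alpha-1}\ell(n)$. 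Hence $\E[M_n]/\E[\hat M_n]\to 1$. Applying the same Tauberian machinery to $\Phi_n$ and $\Phi_{n,2}$ also yields the variance--factor asymptotics $v_n^- \asymp v_n^+ \asymp n^{\alpha-2}\ell(n)$.

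For the two random ratios I would take $x_n = \varepsilon\,\E[M_n]$ in Corollary \ref{cor:concentration_M} and $x_n = \varepsilon\,\E[K_{n,1}]$ in Corollary \ref{cor:concentration_K}. Combining the orders of magnitude $\E[M_n]\asymp n^{\alpha-1}\ell(n)$, $\E[K_{n,1}]\asymp n^\alpha\ell(n)$ and $v_n^\pm \asymp n^{\alpha-2}\ell(n)$, every Chernoff exponent produced by the two corollaries scales like $c_\varepsilon\, n^\alpha\ell(n)$. Since $\alpha>0$, the corresponding tail probabilities are summable in $n$, so Borel--Cantelli gives $M_n/\E[M_n]\to 1$ and $\hat M_n/\E[\hat M_n]\to 1$ almost surely, and the three--factor decomposition closes the proof.

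The main obstacle is the right--tail sub-Gamma bound on $M_n$, whose Chernoff exponent has the form $v_n^+ n^2\bigl[1+z_n-\sqrt{1+z_n}\bigr]$ with $z_n = x_n/(n v_n^+)$. One has to check that the scaling coincidence $\E[M_n]\asymp n\, v_n^+$ forces $z_n$ to a strictly positive finite limit, so that $1+z_n-\sqrt{1+z_n}$ stays bounded away from $0$ and the exponent genuinely grows like $n^\alpha\ell(n)$; this is also what keeps $x_n$ inside the admissible range $\lambda<1/n$ of the sub--Gamma bound. Once this balance is verified, the remaining estimates are routine bookkeeping on the exponents $\alpha-1$, $\alpha-2$ and $\alpha$.
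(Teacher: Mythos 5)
Your proposal is correct and follows essentially the same route as the paper: the identical three-factor decomposition, Borel--Cantelli applied to the concentration bounds of Corollaries \ref{cor:concentration_M} and \ref{cor:concentration_K} with deviations $x_n=\varepsilon\,\E[M_n]$ and $x_n=\varepsilon\,\E[K_{n,1}]$, and Theorem \ref{thm:karlin} to make the exponents grow like $n^\alpha\ell(n)$. The only cosmetic difference is that you obtain $\E[M_n]\simeq\alpha\Gamma(1-\alpha)n^{\alpha-1}\ell(n)$ by a direct Tauberian argument on $\Phi_{n,1}/n$, whereas the paper uses the exact identity $\E[M_n]=\E[K_{n+1,1}]/(n+1)$ and then invokes Theorem \ref{thm:karlin}; your explicit check that $x_n/(nv_n^+)$ stays bounded away from $0$ and $\infty$ in the sub-Gamma exponent is a worthwhile detail the paper leaves implicit.
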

\begin{proof}
In order to prove the multiplicative consistency we first  show that $K_{n,1}/\E[K_{n,1}]\stackrel{a.s.}{\longrightarrow}1$
and that $M_n/\E[M_n]\stackrel{a.s.}{\longrightarrow} 1$. As for the former convergence, we can use the concentration inequality \eqref{eq:concentration_K} given in Corollary \ref{cor:concentration_K} when $r=1$, which, for any $\varepsilon>0$, gives
\begin{equation} \label{eq:conc_K1}
\P (|K_{n,1}/\E [K_{n,1}]-1| \geq \varepsilon) \leq 2\exp \left\{ -\frac{\varepsilon^2\E [K_{n,1}]}{2(1+\varepsilon \E [K_{n,1}]/3)}\right\}.
\end{equation}
When $\varepsilon>0$ is fixed, we can use the asymptotic $ \E[K_{n, 1}] \simeq \alpha \Gamma (1-\alpha)n^\alpha \ell (n)$ in  Theorem \ref{thm:karlin} to say that
\begin{equation*}
\sum_{n \geq 1} \P (|K_{n,1}/\E [K_{n,1}]-1| \geq \varepsilon) \stackrel{\eqref{eq:conc_K1}}{\lesssim}
\sum_{n \geq 1} 2\exp \left\{- n^\alpha \ell (n)  \right\} < +\infty,
\end{equation*}
which implies that for any $\varepsilon>0$, $\P (\limsup_{n} ( |K_{n,1}/\E [K_{n,1}]-1| \geq \varepsilon ))=0$ by the first Borel-Cantelli lemma, hence $K_{n,1}/\E[K_{n,1}]\stackrel{a.s.}{\longrightarrow}1$.\\
Analogously we may use Corollary \ref{cor:concentration_M} to prove the almost sure convergence to $1$ of the ratio
$M_n/\E [M_n]$. Indeed, for any $\varepsilon>0$, we have
\begin{align*}
&\P (|M_n/\E [M_n]-1| \geq \varepsilon)\\
 & \qquad\leq \P (M_n -\E [M_n]\geq \varepsilon \E [M_n]) +\P (M_n-\E [M_n]\leq 
-\varepsilon \E M_n)\\
 & \qquad\leq \exp\left\{ -v_n^+n^2 \left[ 1+\frac{\varepsilon \E [M_n]}{n v_n^+} -\sqrt{1+\frac{\E [M_n]}{n v_n^+}}  \right] \right\}  +\exp \left\{ -\frac{\varepsilon^2 (\E [M_n])^2}{2 v_n^-} \right\}.
\end{align*}
By observing that $\E [M_n]= \E [K_{n+1,1}]/(n+1)$, the previous upper bound boils down to
\begin{equation}
\begin{split}
\P (|M_n/\E [M_n]-1| \geq \varepsilon) &\leq \exp\left\{ -v_n^+n^2 \left[ 1+\frac{\varepsilon \E [K_{n+1,1}]}{n (n+1) v_n^+} -\sqrt{1+\frac{\E [K_{n+1,1}]}{n (n+1)v_n^+}}  \right] \right\}\\
& \qquad\qquad\qquad\qquad\qquad\quad+\exp \left\{ -\frac{\varepsilon^2 (\E[ K_{n+1,1}])^2}{2 (n+1)^2 v_n^-} \right\}.
 \end{split}
\end{equation}
Now, using again Theorem \ref{thm:karlin}, it is not difficult to see that for any fixed $\varepsilon>0$
\begin{equation}
\sum_{n \geq 1} \P (|M_n/\E [M_n]-1| \geq \varepsilon) \lesssim \sum_{n \geq 1}\exp \left\{- n^\alpha \ell (n)  \right\} <+\infty 
\end{equation}
then, by the first Borel-Cantelli lemma, we get $M_n/\E [M_n]\stackrel{a.s.}{\longrightarrow}1$, as well.\\
By the previous results the consistency of $\hat{M}_n$ easily follows, indeed 
\begin{equation*}
\frac{M_n}{\hat{M}_n} = \frac{M_n}{\E [M_n]} \cdot \frac{n\E [M_n]}{\E [K_{n,1}]} \cdot \frac{ \E [K_{n,1}]}{K_{n,1}} 
\stackrel{a.s.}{\longrightarrow}1,
\end{equation*}
since all the ratios on the r.h.s. converge to $1$ almost surely.
\end{proof}


\begin{thebibliography}{9}

\bibitem[{Auton et al.(2015)}]{Aut(15)}
\textsc{Auton, A. et al.} (2015). 
\newblock {A global reference for human genetic variation.} 
\newblock \textit{Nature}, \textbf{526}, 68--74.

\bibitem[{Ayed et al.(2017)}]{Fad(17)}
\textsc{Ayed, F., Battiston, M., Camerlenghi, F.  \& Favaro, S.} (2018).
\newblock {A Good-Turing estimator for feature allocation models.} 
\newblock \textit{Submitted}.

\bibitem[{Ayed et al.(2018)}]{Fad(18)}
\textsc{Ayed, F., Battiston, M., Camerlenghi, F.  \& Favaro, S.} (2018).
\newblock {On consistent estimation of the missing mass.} 
\newblock \textit{Preprint arXiv:1806.09712}.

\bibitem[{Ben-Hamou et al.(2017)}]{Ben(17)} 
\textsc{Ben-Hamou, A., Boucheron, S.  \& Ohannessian, M.I.} (2017). Concentration inequalities in the infinite urn scheme for occupancy counts and the missing mass, with applications. \textit{Bernoulli}, \textbf{23}, 249--287.

\bibitem[{Beirlant \& Devroye(1999)}]{Bei(99)}
\textsc{Beirlant, J. \& Devroye, L.} (1999)
\newblock {On the impossibility of estimating densities in the extreme tail.} 
\newblock \textit{Statistics and Probability Letters}, \textbf{43}, 57--64


\bibitem[{Bingham et al.(1987)}]{Bin87}
\textsc{Bingham, N.H., Goldie, C.M. \& Teugels, J.L.}
\newblock{\textit{Regular Variation.}} 
\newblock{Cambridge University Press.}


\bibitem[Boucheron et al.(2013)]{Bou(13)}  
\textsc{Boucheron, S., Lugosi, G. \& Massart, P.} (2013). 
\newblock\textit{Concentration inequalities.}
\newblock{Oxford University Press.}

\bibitem[{Chao et al.(2014)}]{Cha(14)}
\textsc{Chao, A., Gotelli, N.J., Hsieh, T.C., Sander, E.L., Ma, K.H., Colwell, R.K. \& Ellison, A.M.} (2014).
\newblock {Rarefaction and extrapolation with Hill numbers: a framework for sampling and estimation in species diversity studies.}
\newblock \textit{Ecological Monographs}, \textbf{84}, 45--67.

\bibitem[{Chatterjee \& Diaconis(2018)}]{Cha(18)}
\textsc{Chatterjee, S. \& Diaconis, P.} (2018)
\newblock{The sample size required in importance sampling.}
\newblock\textit{Annals of Applied Probability}, \textbf{28}, 1099--1135 


\bibitem[{Chu et al.(2006)}]{Chu(06)}
\textsc{Chu, W., Ghahramani, Z., Krause, R. \& Wild, D.L.} (2006).
\newblock {Identifying protein complexes in highthroughput protein interaction screens using an infinite latent feature model.}
\newblock \textit{Pacific Symposium of BIOCOMPUTING.}

\bibitem[{Colwell et al.(2012)}]{Col(12)}
\textsc{Colwell, R., Chao, A., Gotelli, N.J., Lin, S., Mao, C.X., Chazdon, R.L. \& Longino, J.T.} (2012).
\newblock {Models and estimators linking individual-based and sample-based rarefaction, extrapolation and comparison of assemblages.}
\newblock \textit{Journal of Plant Ecology}, \textbf{5}, 3--21.

\bibitem[Daley and Vere-Jones(2008)]{Dal(08)}
\textsc{Daley, D.J. \& Vere-Jones, D.} (2008). \textit{An introduction to the theory of point processes}. Vol. I \& II. Springer, New York.

\bibitem[{Efron and Morris(1973)}]{Efr(73)}
\textsc{Efron, B. \& Morris, C} (1973). Stein's estimation rule and its competitors - an empirical Bayes approach.
 \textit{Journal of the American Statistical Association}, \textbf{68}, 117--130.

\bibitem[Feller(1971)]{Fel(71)}
\textsc{Feller, W.} (1971). \textit{An introduction to probability theory and its applications}, vol. II,
Wiley, NY.

\bibitem[Gnedin et al.(2007)]{Gne(07)}  
\textsc{Gnedin, A., Hansen, B. \& Pitman, J.} (2007). 
\newblock {Notes on the occupancy problem with infinitely many boxes: general asymptotics and power laws.} \newblock \textit{Probability Surveys}, \textbf{4}, 146--171.

\bibitem[{G\"or\"ur et al.(2006)}]{Gor(06)}
\textsc{G\"or\"ur, D., J\"akel, F. \& Rasmussen, C.E.} (2006).
\newblock {A choice model with infinitely many latent features.}
\newblock \textit{23rd International Conference on Machine Learning.}

\bibitem[{Ionita-Laza et al.(2009)}]{Ion(09)}
\textsc{Ionita-Laza, I., Lange, C. \& Laird, N.M.} (2009).
\newblock {Estimating the number of unseen variants in the human genome.}
\newblock \textit{Proceeding of the National Academy of Sciences}, \textbf{106}, 5008--5013.

\bibitem[{Ionita-Laza et al.(2010)}]{Ion(10)}
\textsc{Ionita-Laza, I., Lange, C. \& Laird, N.M.} (2010).
\newblock {On the optimal design of genetic variant discovery studies.}
\newblock \textit{Statistical Applications in Genetics and Molecular Biology}, \textbf{9}.

\bibitem[James(2017)]{Jam(17)}
\textsc{James, L.F.} (2017). Bayesian Poisson calculus for latent feature modeling via generalized Indian buffet process priors. \textit{Annals of Statistics}, \textbf{45}, 2016--2045.

\bibitem[Karamata, J.(1930)]{Kar(30)}
\textsc{Karamata, J.} (1930). Sur un mode de croissance r\'eguli\`ere des fonctions. \textit{Mathematica (Cluj)},
\textbf{4}, 38--53.

\bibitem[Karamata, J.(1933)]{Kar(33)}
\textsc{Karamata, J.} (1933). Sur un mode de croissance r\'eguli\`ere. Th\'eor\`emes fondamentaux. \textit{Bull. Soc.
Math. France}, \textbf{61}, 55--62.

\bibitem[Karlin(1967)]{Kar(67)}  
\textsc{Karlin, S.} (1967). Central limit theorems for certain infinite urn schemes. \textit{J. Math. Mech.}, \textbf{17}, 373--401.

\bibitem[Kingman(1993)]{Kin(93)}
\textsc{Kingman, J.F.C.} (1993). \textit{Poisson processes}. Oxford University Press, Oxford.

\bibitem[{Knowles \&  Ghahramani(2007)}]{Kno(07)}
\textsc{Knowles \&  Ghahramani} (2007).
\newblock {Infinite sparse factor analysis and infinite independent components analysis}
\newblock \textit{International Conference on Independent Component Analysis and Signal Separation}

\bibitem[{Meeds et al.(2007)}]{Mee(07)}
\textsc{Meeds, E., Ghahramani, Z., Neal, R. \& Rowies, S.T.} (2007).
\newblock {Modeling dyadic data with binary latent factors.}
\newblock \textit{Advances in Neural Information Processing Systems.}

\bibitem[{Miller et al.(2010)}]{Mil(10)}
\textsc{Miller, K.T., Griffiths, T.L. \& Jordan, M.I.} (2007).
\newblock {Nonparametric latent feature models for link predictions}
\newblock \textit{Advances in Neural Information Processing Systems.}

\bibitem[{Mossel \& Ohannessian(2015)}]{Mos(15)}
\textsc{Mossel, E. \&  Ohannessian, M.I.} (2015) 
\newblock {On the impossibility of learning the missing mass.}
\newblock \textit{Preprint: arXiv:1503.03613}

\bibitem[{Navarro \& Griffiths(2007)}]{Nav(07)}
\textsc{Navarro, D.J. \& Griffiths, T.L.} (2010).
\newblock { A nonparametric Bayesian model for inferring features from similarity judgments.}
\newblock \textit{Advances in Neural Information Processing Systems.}


\bibitem[Ohannessian  \& Dahleh(2012)]{Oha(12)}  
\textsc{Ohannessian, M.I. \& Dahleh, M.A.} (2012). 
\newblock{Rare probability estimation under regularly varying heavy tails.} 
\newblock \textit{Journal of Machine Learning Research}, \textbf{23}, 1--24.

\bibitem[{Quenouille(1956)}]{Que(56)}
\textsc{Quenouille, M.H.} (1956). Notes on bias in estimation. \textit{Biometrika}, \textbf{43}, 353--360.

\bibitem[{Wood \& Griffiths(2007)}]{Woo(07)}
\textsc{Wood, F. \& Griffiths, T.L.} (2007).
\newblock {Particle filtering for nonparametric Bayesian matrix factorization.}
\newblock \textit{Advances in Neural Information Processing Systems.}

\bibitem[{Wood et al.(2006)}]{Woo(06)}
\textsc{Wood, F., Griffiths, T.L. \& Ghahramani, Z.} (2006).
\newblock {A non-parametric Bayesian method for inferring hidden causes.}
\newblock \textit{22nd Conference in Uncertainty in Artificial Intelligence.}

\bibitem[{Zou et al.(2016)}]{Zou(16)}
\textsc{Zou, J., Valiant, G., Valiant, P., Karczewski, K., Chan, S.O., Samocha, K., Lek, M., Sunyaev, S., Daly, M. \& MacArthur, D.G.} (2016).
\newblock {Quantifying the unobserved protein-coding variants in human populations provides a roadmap for large-scale sequencing projects.}
\newblock \textit{Nature Communications}, \textbf{7}.

\end{thebibliography}
\end{document}